\documentclass[12pt,reqno]{amsart}

\usepackage[all]{xy}
\usepackage{amsthm,array,amssymb,amscd,amsfonts,latexsym}

\usepackage{enumerate}

\theoremstyle{plain}

\newtheorem{theorem}{Theorem}

\newtheorem{remark}{\bf Remark}
\newtheorem{corollary}{Corollary}

\theoremstyle{definition}

\newtheorem{nothing*}[theorem]{}
\newtheorem{subnothing*}[sub]{}
\newtheorem{example}{Example}

\theoremstyle{remark}


\begin{document}

\title[On algebraic group varieties]{On
algebraic group varieties}
\author[Vladimir  L. Popov]{Vladimir  L. Popov}
\address{Steklov Mathematical Institute,
Russian Academy of Sciences, Gub\-kina 8,
Moscow 119991, Russia}
\email{popovvl@mi-ras.ru}

\begin{abstract}
Several  results on presenting
an affine algebraic group va\-rie\-ty as a product of algebraic va\-rie\-ties are obtained.
 \end{abstract}

\maketitle

This note explores possibility of presenting
an affine algebraic group va\-rie\-ty as a product of algebraic varieties. As starting
point served the question of B.\;Ku\-nyavsky \cite{6} about the validity of the statement formulated below as Corollary of Theorem \ref{thm1}. For some special presen\-ta\-tions, their existence is proved in Theorem \ref{thm1}, and, on the contrary, nonexistence in Theorems \ref{thm2}--\ref{thm6}.

Let $G$ be a connected reductive algebraic group over an algebraically closed field $k$. The derived subgroup $D$ and the connected component $Z$ of the identity element of the center of the group $G$ are respectively
a connected semisimple algebraic group
and a torus (see \cite[Sect.\;14.2, Prop.\;(2)]{3}). The algebraic groups
$D\times Z$ and $G$ are not always isomor\-phic; the latter is equivalent to the equality
$D\cap Z=1$, which, in turn, is equivalent to the property that the isogeny of algebraic groups
$D\times Z\to G,\; (d, z)\mapsto d  z$, is their isomorphism.

\begin{theorem}\label{thm1}
There is an injective algebraic group homomorphism
$$\iota\colon Z\hookrightarrow G$$
such that
$\varphi\colon D\times Z\to G,\; (d, z)\mapsto d  \,\iota(z),$
is an isomorphism of al\-geb\-raic varieties.
\end{theorem}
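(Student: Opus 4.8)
The plan is to reduce the assertion to a statement about homomorphisms of tori and to settle that statement on the level of cocharacter lattices. Write $\pi\colon G\to G/D=:A$ for the quotient morphism; since $G$ is connected reductive, $A$ is a torus, and, as recalled in the introduction, $G=DZ$ with $D\cap Z$ finite, so $\pi|_Z\colon Z\to A$ is an isogeny of tori (in particular $\dim Z=\dim A$). The key reduction is that \emph{it suffices to produce an injective algebraic group homomorphism $\iota\colon Z\hookrightarrow G$ for which $\pi\circ\iota\colon Z\to A$ is an isomorphism of algebraic groups}. Indeed, given such an $\iota$, set $z_g:=(\pi\circ\iota)^{-1}(\pi(g))$ for $g\in G$ and define $\psi\colon G\to D\times Z$ by $\psi(g):=\bigl(g\,\iota(z_g)^{-1},\,z_g\bigr)$. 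Then $\psi$ is a morphism (assembled from $\pi$, the inverse of the group isomorphism $\pi\circ\iota$, the morphism $\iota$, and inversion and multiplication in $G$); its first coordinate lies in $D=\ker\pi$ because $\pi\bigl(g\,\iota(z_g)^{-1}\bigr)=\pi(g)\,\bigl((\pi\circ\iota)(z_g)\bigr)^{-1}=e$; and a direct check gives $\psi\circ\varphi=\mathrm{id}_{D\times Z}$ and $\varphi\circ\psi=\mathrm{id}_{G}$. Hence such an $\iota$ makes $\varphi$ an isomorphism of varieties.

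To construct $\iota$, fix a maximal torus $T$ of $G$ containing the torus $Z$, and put $S:=(T\cap D)^{\circ}$, a maximal torus of $D$. I first note that $T\cap D$ is connected, i.e. $T\cap D=S$: the finite group $Z\cap D$ is central in $G$, hence central in $D$, hence lies in every maximal torus of $D$, so $Z\cap D\subseteq S$; moreover $T=S\cdot Z$, since $S\cdot Z$ is a subtorus of $T$ of dimension $\dim S+\dim Z=\dim S+\dim A=\dim T$; and then writing $x\in T\cap D$ as $x=sz$ with $s\in S\subseteq D$, $z\in Z$ forces $z=s^{-1}x\in Z\cap D\subseteq S$, whence $x\in S$. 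Consequently $\pi|_T\colon T\to A$ is a surjective homomorphism of tori with \emph{connected} kernel $S$, so applying $\mathrm{Hom}_{\mathbb{Z}}(-,\mathbb{Z})$ to the exact sequence $0\to X^{*}(A)\to X^{*}(T)\to X^{*}(S)\to 0$ (and using that $X^{*}(S)$ is free) shows that the induced map of cocharacter lattices $(\pi|_T)_{*}\colon X_{*}(T)\to X_{*}(A)$ is surjective with kernel $X_{*}(S)$.

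Now $X_{*}(Z)$ and $X_{*}(A)$ are free abelian groups of the same rank $\dim Z$, so choose an isomorphism $h\colon X_{*}(Z)\xrightarrow{\sim}X_{*}(A)$ and, using the surjectivity of $(\pi|_T)_{*}$, lift $h$ to a homomorphism $f\colon X_{*}(Z)\to X_{*}(T)$ with $(\pi|_T)_{*}\circ f=h$. Let $\iota\colon Z\to T\hookrightarrow G$ be the homomorphism of tori corresponding to $f$. Since $(\pi|_T)_{*}$ carries $f(X_{*}(Z))$ isomorphically onto $X_{*}(A)$ and has kernel $X_{*}(S)$, one gets a direct sum decomposition $X_{*}(T)=f(X_{*}(Z))\oplus X_{*}(S)$; in particular $f$ is injective with torsion-free cokernel, so $\iota$ is a closed immersion — an injective algebraic group homomorphism — and $(\pi\circ\iota)_{*}=(\pi|_T)_{*}\circ f=h$ is an isomorphism of lattices, so $\pi\circ\iota$ is an isomorphism of algebraic groups. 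By the reduction of the first paragraph, $\varphi$ is an isomorphism of algebraic varieties, as required.

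I expect the points requiring care to be the reduction in the first paragraph — one genuinely needs an isomorphism of algebraic groups, not merely a bijective morphism, since in positive characteristic a bijective homomorphism of tori (such as the Frobenius $\mathbb{G}_m\to\mathbb{G}_m$) need not be an isomorphism — and the surjectivity of $(\pi|_T)_{*}$, for which the connectedness of $T\cap D$ is exactly the needed input. Everything else is formal.
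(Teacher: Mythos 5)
Your overall strategy is genuinely different from the paper's: you work with the quotient torus $A=G/D$ and build $\iota$ on the level of cocharacter lattices so that $\pi\circ\iota$ is an isomorphism, then write down an explicit inverse of $\varphi$; the paper instead takes a complement $S$ of $T_D$ in $T_G$ (a subtorus of a torus is a direct factor, \cite[8.5, Cor.]{3}), proves $D\times S\to G$ is bijective via $G=DZ$ and the Bruhat decomposition, and then checks separability by a Lie algebra computation. Your first paragraph (the reduction) is correct and is a nice way to get the variety isomorphism without Bruhat: once $\pi\circ\iota$ is an isomorphism of algebraic groups, the map $g\mapsto\bigl(g\,\iota(z_g)^{-1},z_g\bigr)$ is indeed a morphism inverse to $\varphi$. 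The argument that $T\cap D$ is connected is also fine.

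There is, however, a genuine gap at the step you yourself single out as safe: the claim that surjectivity of $\pi|_T$ with \emph{connected} set-theoretic kernel $S$ forces exactness of $0\to X^{*}(A)\to X^{*}(T)\to X^{*}(S)\to 0$, equivalently surjectivity of $(\pi|_T)_{*}\colon X_{*}(T)\to X_{*}(A)$. In positive characteristic this implication is false: for $f\colon\mathbb{G}_m^2\to\mathbb{G}_m$, $(t_1,t_2)\mapsto t_1^{p}$, the set-theoretic kernel is the connected subtorus $\{1\}\times\mathbb{G}_m$, yet $f_{*}\colon\mathbb{Z}^2\to\mathbb{Z}$ is $(a,b)\mapsto pa$, which is not surjective, and the character sequence is not exact in the middle. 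What exactness really requires is that the induced bijection $T/S\to A$ be an isomorphism, i.e.\ that $\pi|_T$ be \emph{separable} (equivalently, that the scheme-theoretic kernel of $\pi|_T$ be reduced); connectedness alone does not give this. The gap is repairable: since the quotient map $\pi$ is separable with $\ker d\pi={\rm Lie}\,D$, and ${\rm Lie}\,G={\rm Lie}\,D+{\rm Lie}\,T$ \cite[Sect.\;13.18, Thm.]{3}, one gets $\dim({\rm Lie}\,T\cap{\rm Lie}\,D)=\dim T+\dim D-\dim G=\dim S$, so $d(\pi|_T)$ is surjective and $\pi|_T$ is separable — note this is exactly the same nontrivial input the paper invokes for its own separability step. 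Alternatively, you could bypass $A$ entirely and quote \cite[8.5, Cor.]{3} to split $S$ off $T$ directly, as the paper does. In characteristic zero your argument is complete as written; since the theorem is stated over an arbitrary algebraically closed field, the separability point must be addressed.
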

\begin{corollary}
\label{iv} The
underlying varieties of
 \textup(generally noniso\-mor\-phic\textup)
 al\-geb\-raic groups $D\times Z$ and $G$ are isomorphic.
\end{corollary}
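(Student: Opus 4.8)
Granting Theorem \ref{thm1}, the Corollary is immediate: the map $\varphi$ produced there is an isomorphism of the underlying varieties of $D\times Z$ and $G$. So the substance is in Theorem \ref{thm1}, and that is what I would prove. The idea is to replace the central subtorus $Z$ of $G$ by a subtorus $S$ contained in a maximal torus of $G$ and mapping \emph{isomorphically} onto $G/D$; then the quotient morphism $G\to G/D$ becomes a $D$-torsor admitting a section, hence is trivial, and the trivialization has exactly the form $(d,z)\mapsto d\,\iota(z)$.

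\emph{Constructing $\iota$.} Fix a maximal torus $T$ of $G$. Since $Z$ is central, $Z\subseteq T$; put $T_D:=T\cap D$. As $D\times Z\to G$ is surjective and $Z\subseteq T$, we have $G=DT$, so $G/D\cong T/T_D$ is a torus. The exact sequence $1\to T_D\to T\to T/T_D\to 1$ gives an exact sequence of cocharacter lattices
$$0\longrightarrow X_*(T_D)\longrightarrow X_*(T)\longrightarrow X_*(G/D)\longrightarrow 0 ,$$
which splits because $X_*(G/D)$ is free. Choosing a complement of $X_*(T_D)$ in $X_*(T)$ and taking the corresponding subtorus $S\subseteq T$, we get $T=T_D\times S$, and the restriction $\pi|_S$ of the quotient map $\pi\colon G\to G/D$ induces an isomorphism $X_*(S)\to X_*(G/D)$, hence $\pi|_S\colon S\to G/D$ is an isomorphism of tori. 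Since $\dim S=\dim(G/D)=\dim Z$ and $k$ is algebraically closed, there is an isomorphism of tori $\theta\colon Z\to S$; let $\iota\colon Z\to G$ be $\theta$ followed by the inclusion $S\hookrightarrow G$, an injective homomorphism of algebraic groups.

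\emph{Why $\varphi$ is an isomorphism of varieties.} As $D$ is normal, $\pi$ is a homomorphism, and $\sigma:=(\pi|_S)^{-1}\colon G/D\to S\hookrightarrow G$ is a section of it. A quotient by a normal subgroup that admits a section is trivial: the morphisms $\Psi\colon D\times(G/D)\to G$, $(d,a)\mapsto d\,\sigma(a)$, and $g\mapsto\bigl(g\,\sigma(\pi(g))^{-1},\pi(g)\bigr)$ are mutually inverse (note $g\,\sigma(\pi(g))^{-1}\in\ker\pi=D$ since $\pi\circ\sigma=\mathrm{id}$), so $\Psi$ is an isomorphism of varieties. Composing $\Psi$ with the isomorphism $\mathrm{id}_D\times(\pi|_S\circ\theta)\colon D\times Z\to D\times(G/D)$ yields exactly $(d,z)\mapsto d\,\sigma(\pi(\theta(z)))=d\,\theta(z)=d\,\iota(z)$, i.e. the map $\varphi$; hence $\varphi$ is an isomorphism of varieties.

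\emph{Main obstacle.} The heart of the argument is the construction of $\iota$: one has to put $Z$ into a maximal torus $T$ and use $G=DT$ to see that $G/D$ is a torus, which is precisely what makes $X_*(T_D)$ a direct summand of $X_*(T)$ and yields a subtorus $S$ transverse to $D$. After that everything is formal — transversality already at the level of $T$ (namely $T=T_D\times S$) automatically produces a section of the torsor $G\to G/D$, so passing from $T$ to $G$ is free. The one delicate point in positive characteristic is to arrange $\pi|_S$ to be a genuine isomorphism of algebraic groups (guaranteed by the splitting of lattices), not merely a bijective isogeny, since the latter would not force $\varphi$ to be an isomorphism of varieties.
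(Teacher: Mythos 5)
Your deduction of the Corollary from Theorem \ref{thm1} is exactly the paper's (the Corollary is stated there without separate proof), and your proof of Theorem \ref{thm1} is correct but takes a genuinely different route at the key step. Both arguments start identically: replace $Z$ by a subtorus $S$ complementary to a maximal torus $T_D$ of $D$ inside a maximal torus of $G$ (the paper invokes \cite[8.5, Cor.]{3} for the existence of $S$; your splitting of the cocharacter sequence is the same fact), and both end by transporting an abstract isomorphism of the equidimensional tori $Z$ and $S$ to define $\iota$. The divergence is in why the product map is an isomorphism of varieties. The paper proves that $\psi\colon D\times S\to G$, $(d,s)\mapsto ds$, is bijective --- surjectivity from $G=DZ$, injectivity from the uniqueness in the Bruhat decomposition $g=u'n_\sigma u t^{\ }_G$ --- and then verifies separability by showing $d\psi_{(1,1)}$ is surjective via ${\rm Lie}\,G={\rm Lie}\,D+{\rm Lie}\,T_G$, concluding by \cite[Sect.\;18.2, Thm.]{3}. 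You instead use $S$ to produce a section $\sigma$ of the quotient homomorphism $\pi\colon G\to G/D$ and write down the explicit two-sided inverse $g\mapsto\bigl(g\,\sigma(\pi(g))^{-1},\pi(g)\bigr)$; this trivializes the torsor formally and bypasses both the Bruhat decomposition and the entire separability discussion, which is the delicate issue in positive characteristic that the paper must confront head-on. What your route costs is the need to know that the affine quotient $G/D$ exists, that $G/D\cong T/T_D$ (which uses separability of $G\to G/D$, automatic for quotients by normal subgroups), and that $\pi|_S$ is a genuine isomorphism rather than a bijective isogeny --- your lattice splitting does secure this last point. One small gap to patch: you take the cocharacter lattice of $T_D:=T\cap D$, so you should justify that $T\cap D$ is connected; this follows since $T=Z\cdot(T\cap D)^{\circ}$ and $D\cap Z$, being central in $D$, lies in the maximal torus $(T\cap D)^{\circ}$ of $D$.
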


\begin{remark}{\rm
The proof of Theorem \ref{thm1} contains more information than its statement (the existence of $ \iota $ is proved by an explicit construction).}
\end{remark}


\begin{example}[{\rm \cite[Thm.\;8,\;Proof]{9}}]  Let he group $G$ be ${\rm GL}_n$. Then $D={\rm SL}_n$, $Z=\{{\rm diag}(t,\ldots, t)\mid t\in k^\times\}$, and  one can take
${\rm diag}(t,\ldots, t)\mapsto {\rm diag}(t, 1,\ldots, 1)$ as $\iota$. In this Example,  $G$ and $D \times Z$ are nonisomorphic algebraic groups.
\end{example}

\begin{proof}[Proof of Theorem {\rm\ref{thm1}}]
Let $T_D$ be a maximal torus of the group  $D$, and let $T_G$ be a maximal torus of the group $G$ containing $T_D$. The torus $T_D$ is a direct factor of the group $T_G$: in the latter, there is a torus $S$ such that the map
$
T_D\times S\to
T_G,\; (t, s)\mapsto ts,
$
is an isomorphism of algebraic groups (see
\cite[8.5, Cor.]{3}).
We shall show that
\begin{equation}\label{psi}
\psi\colon D\times S\to G, \quad (d,s)\mapsto d  s,
\end{equation}
is an isomorphism of algebraic varieties.

As is known
(see \cite[Sect. 14.2, Prop. (1),  (3)]{3}),
\begin{equation}\label{pro}
\mbox{\rm (a)} \,\;Z\subseteq T_G,\qquad
\mbox{\rm (b)}\,\; DZ=G.
\end{equation}

Let $g\in G$. In view of \eqref{pro}{\rm(b)}, we have $g=dz$ for some $d\in D$, $z\in Z$, and in view of  \eqref{pro}(a) and the definition of
 $S$, there are $t\in T_D$, $s\in S$ such that $z=ts$. We have $dt\in D$ and
$\psi(dt, s)=dts=g$. Therefore, the morphism $\psi$ is surjective.

Consider in $G$ a pair of mutually opposite Borel subgroups containing $T_G$.
The unipotent radicals $U$ and $U^-$ of these Borel subgroups lie in $D$.
Let $N_D(T_D)$ and $N_G(T_G)$ be the normalizers of tori $T_D$ and $T_G$ in the groups $D$ and $G$ respectively. Then $N_D(T_D)\subseteq N_G(T_G)$ in view of \eqref{pro}{\rm(b)}. The homomorphism  $N_D/T_D\to N_G/T_G$  induced by this embedding is an isomorphism of groups  (see \cite[IV.13]{3}), by which we identify them and denote by $W$.
For each $\sigma\in W$,  fix a representative $n_\sigma\in N_D(T_D)$. The group $U\cap n_{\sigma}U^-n_{\sigma}^{-1}$ does not depend on the choice of this representative,
since $T_D$ normalizes $U^-$; we denote it by $U'_{\sigma}$.

It follows from the Bruhat decomposition that for each $g\in G$,
there are uniquely defined $\sigma\in W$, $u\in U$, $u'\in U'_\sigma$ and
$t^{\ }_G\in T_G$
such that
$g=u'n_\sigma u t^{\ }_G$ (see \cite[Sect. 28.4, Thm.]{5}).  In view of the definition of $S$, there are uniquely defined $t^{\ }_D\in T_D$ and $s\in S$ such that $t^{\ }_G=t^{\ }_Ds$, and in view of
 $u', n_\sigma, u, t^{\ }_D\in  D$, the condition $g\in D$ is equivalent to the condition $s=1$. It follows from this and the definition of the morphism $\psi$ that the latter is injective.

Thus $\psi$ is a bijective morphism. Therefore, to prove that it is an isomor\-phism of algebraic varieties, it remains to prove its separability  (see  \cite[Sect. 18.2, Thm.]{3}). We have
${\rm Lie}\,G={\rm Lie}\,D+{\rm Lie}\,T_G$ (see \cite[Sect. 13.18, Thm.]{3}) and  ${\rm Lie}\,T_G={\rm Lie}\, T_D+{\rm Lie}\,S$ (in view of the definition of $S$). Therefore,
\begin{equation}\label{Lie}
{\rm Lie}\,G={\rm Lie}\,D+{\rm Lie}\,S.
\end{equation}
On the other hand, it is obvious from \eqref{psi} that the restrictions of the morphism $\psi$ to the subgroups $D\times\{1\}$ and $\{1\}\times S$
in $D\times S$ are isomor\-phisms respectively with subgroups $D$ and $S$ in $G$. Since
$
{\rm Lie}\,(D\times S)=
{\rm Lie}\,(D\times \{1\})
+ {\rm Lie}\,(\{1\} \times S),
$
it follows from \eqref{Lie} that the differential  of morphism  $\psi$ at the point $(1,1)$ is surjective.
Therefore (see   \cite[Sect. 17.3, Thm.]{3}), the morphism $\psi$ is separable.

Since $\psi$ is an isomorphism, it follows from \eqref{psi} that $\dim G=\dim D+\dim S$.
On the other hand,
\eqref{pro}{\rm(b)} and finiteness of $D\cap Z$ imply that $\dim G=\dim D+\dim Z$. Therefore,
$Z$ and $S$ and equidimensional, and hence isomorphic tori. Whence, as $\iota$ we can take the composition of any isomorphism of tori $Z\to S$ with the identity embedding
$S \hookrightarrow G$.
\end{proof}

 \begin{theorem}\label{thm2} An algebraic variety on which there is a nonconstant in\-ver\-tible regular function, cannot be a direct factor of a connected semisimple algebraic group variety.
\end{theorem}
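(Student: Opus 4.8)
The plan is to reduce everything to the statement that a connected semisimple algebraic group $H$ carries no nonconstant invertible regular function, and then to pass to a direct factor by pulling back along a projection.

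For the reduction, suppose to the contrary that an algebraic variety $X$ on which there is a nonconstant invertible regular function $f$ is a direct factor of the variety underlying such an $H$; thus there is an isomorphism of varieties $\phi\colon H\to X\times Y$ for some variety $Y$. Since $Y\neq\varnothing$, the composition $\pi:=\mathrm{pr}_{X}\circ\phi\colon H\to X$ is surjective, so $\widetilde f:=f\circ\pi$ takes the same values as $f$ and is therefore nonconstant, while $\widetilde f$ is invertible with inverse $f^{-1}\circ\pi$. This contradicts the assertion that $H$ has no nonconstant invertible regular function. Hence it suffices to prove:

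\smallskip
\noindent\emph{If $H$ is a connected semisimple algebraic group, then $k[H]^{\times}=k^{\times}$.}

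\smallskip
To prove this I would invoke the classical theorem of Rosenlicht according to which, for a connected algebraic group $G$ over an algebraically closed field, $u\mapsto u/u(e)$ is an isomorphism of $k[G]^{\times}/k^{\times}$ onto the character group $X^{*}(G)=\mathrm{Hom}(G,\mathbb{G}_{m})$; since a connected semisimple group equals its own derived subgroup, $X^{*}(H)=0$, and the assertion follows. Alternatively, one can argue directly: let $T_{H}$ be a maximal torus of $H$, and $U$, $U^{-}$ the unipotent radicals of a pair of opposite Borel subgroups containing $T_{H}$, as in the proof of Theorem~\ref{thm1}. The big Bruhat cell $\Omega:=U^{-}T_{H}U$ is open and dense in $H$ and, as a variety, is isomorphic to $\mathbb{A}^{m}\times T_{H}\times\mathbb{A}^{m}$. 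Over a reduced ring the units of a polynomial ring reduce to those of the base, so $k[\Omega]^{\times}/k^{\times}\cong X^{*}(T_{H})$, and, $\Omega$ being dense in the irreducible $H$, restriction embeds $k[H]^{\times}/k^{\times}$ into $X^{*}(T_{H})$. Thus a unit of $k[H]$ agrees on $\Omega$ with a character $\lambda$ of $T_{H}$; pulling back along the homomorphism $\mathrm{SL}_{2}\to H$ associated with a simple root $\alpha$ yields a unit of $k[\mathrm{SL}_{2}]$ that equals $g\mapsto g_{11}^{\langle\lambda,\alpha^{\vee}\rangle}$ on the big cell of $\mathrm{SL}_{2}$, and since $g_{11}$ is not invertible this forces $\langle\lambda,\alpha^{\vee}\rangle=0$. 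As this holds for all simple roots, the coroots span $X_{*}(T_{H})\otimes\mathbb{Q}$, and $X^{*}(T_{H})$ is torsion free, we get $\lambda=0$, i.e. $k[H]^{\times}=k^{\times}$.

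The reduction is immediate, so the content of the theorem is entirely in the displayed assertion. Within the self-contained route the one genuinely delicate point — the place where semisimplicity is essential — is the last step: a unit of $k[H]$, written on the big cell as a character of $T_{H}$, extends to a global regular function on $H$ only when that character is trivial.
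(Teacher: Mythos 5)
Your proof is correct, and its primary route is exactly the paper's: pull the unit back to the semisimple group $H$ along the projection, then invoke Rosenlicht's theorem that $f\mapsto f/f(1)$ identifies nonconstant units with nontrivial characters, which a connected semisimple group does not have. The alternative big-cell argument you sketch is a sound self-contained substitute for Rosenlicht's theorem, but it is extra; the paper stops at the citation.
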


\begin{proof}[Proof of Theorem {\rm\ref{thm2}}]
If the statement of Theorem \ref{thm2}  were not true, then the existence
the nonconstant invertible function specified in it would imply the existence of such a function
$f$ on a connected semisimple algebraic group.
Then, according to
\cite[Thm. 3]{10}, the function $f/f(1)$ would be a nontrivial character of this group, despite the fact that connected semi\-simple groups have no nontrivial characters.
\end{proof}

In Theorems \ref{thm3},
\ref{thm6} below we assume that $k=\mathbb C$; according to the Lefschetz principle, then they are valid for fields $k$ of characteristic zero. Below, topological terms refer to the Hausdorff $\mathbb C$-topology, homology and coho\-mo\-logy are singular, and the notation
$P\simeq Q $ means that the groups $P$ and $Q$ are isomorphic.

\begin{theorem}\label{thm3}
If a $d$-dimensional algebraic variety
$X$ is a direct factor of
a connected reductive algebraic group variety, then
$H_d(X, \mathbb Z)\simeq\mathbb Z$ and $H_i(X, \mathbb Z)=0$ for $i>d$.
\end{theorem}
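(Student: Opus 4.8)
The plan is to combine the Künneth formula with two inputs: the Andreotti--Frankel vanishing theorem for affine varieties, and the computation of the top integral homology of a connected reductive group. Write $G$ for the connected reductive algebraic group with $X\times Y\simeq G$ as algebraic varieties (for a suitable variety $Y$), and set $n=\dim G$, $e=\dim Y=n-d$. The first step is to observe that each factor is again a smooth connected affine variety: fixing a point $y_0\in Y$ identifies $X$ with the closed subvariety $X\times\{y_0\}$ of the affine variety $G$, so $X$ is affine, and similarly $Y$; both are connected, being continuous images of the connected space $G$; and both are smooth, since a direct factor of a smooth variety is smooth in characteristic $0$ (e.g.\ from $\Omega^1_{X\times Y}\cong \mathrm{pr}_X^{*}\Omega^1_{X}\oplus \mathrm{pr}_Y^{*}\Omega^1_{Y}$ and a comparison of fibre dimensions at each point, using $\dim(X\times Y)=\dim X+\dim Y$).

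Next, by the Andreotti--Frankel theorem a smooth affine complex variety of dimension $m$ has the homotopy type of a finite CW-complex of real dimension $\le m$. Applied to $X$ and to $Y$ this gives $H_i(X,\mathbb Z)=0$ for $i>d$ and $H_j(Y,\mathbb Z)=0$ for $j>e$, and it shows that all homology groups of $X$ and of $Y$ are finitely generated. The first of these vanishing statements is exactly the second assertion of the theorem. (If one prefers to bypass the smoothness remark, the analogous vanishing holds for arbitrary complex affine varieties.)

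It remains to prove $H_d(X,\mathbb Z)\simeq\mathbb Z$, and for this one first computes $H_n(G,\mathbb Z)$. In the Hausdorff topology $G$ deformation retracts onto a maximal compact subgroup $K$, which is a compact connected Lie group with $\dim_{\mathbb R}K=\dim_{\mathbb C}G=n$; since $K$ is a closed connected orientable $n$-manifold, $H_n(K,\mathbb Z)\simeq\mathbb Z$, hence $H_n(G,\mathbb Z)\simeq\mathbb Z$. Now apply the Künneth formula to $H_n(X\times Y,\mathbb Z)=H_n(G,\mathbb Z)$. In the tensor part $\bigoplus_{i+j=n}H_i(X,\mathbb Z)\otimes H_j(Y,\mathbb Z)$ the constraints $i\le d$, $j\le e$, $i+j=d+e$ force $i=d$, $j=e$, so this part equals $H_d(X,\mathbb Z)\otimes H_e(Y,\mathbb Z)$; in the Tor part $\bigoplus_{i+j=n-1}\mathrm{Tor}\big(H_i(X,\mathbb Z),H_j(Y,\mathbb Z)\big)$ only the pairs $(i,j)\in\{(d,e-1),(d-1,e)\}$ can contribute, and this part is a finite group since the Tor of finitely generated abelian groups is finite. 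Thus $\mathbb Z\simeq\big(H_d(X,\mathbb Z)\otimes H_e(Y,\mathbb Z)\big)\oplus F$ with $F$ finite; as $\mathbb Z$ is torsion-free, $F=0$ and $H_d(X,\mathbb Z)\otimes H_e(Y,\mathbb Z)\simeq\mathbb Z$. Finally, for finitely generated abelian groups $A\otimes B\simeq\mathbb Z$ forces $A\simeq B\simeq\mathbb Z$: comparing free ranks shows both have rank one, and then tensoring with a rank-one free summand of $B$ embeds the torsion subgroup of $A$ into $A\otimes B\simeq\mathbb Z$, so it vanishes, and symmetrically for $B$. Hence $H_d(X,\mathbb Z)\simeq\mathbb Z$.

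The routine ingredients are the affineness and smoothness of the factors and the homotopy type of $G$; the step that needs care is the last one, where one must check that the dimension bounds genuinely isolate a single tensor summand in the Künneth decomposition and that finite generation of homology is in hand, so that the Tor contributions and the torsion of $H_d(X,\mathbb Z)$ can both be discarded.
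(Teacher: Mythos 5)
Your proof is correct and follows the same skeleton as the paper's: Andreotti--Frankel vanishing for the smooth affine factors, the K\"unneth formula applied to $H_n$, and the computation $H_n(G)\simeq\mathbb Z$ via retraction onto a maximal compact subgroup. The one substantive difference is the endgame. The paper runs K\"unneth with $\mathbb Q$-coefficients, obtains $H_d(X,\mathbb Q)\simeq\mathbb Q$, and then upgrades to $H_d(X,\mathbb Z)\simeq\mathbb Z$ by citing two external facts: finite generation of the homology and, crucially, torsion-freeness of the top homology of a smooth affine variety (Andreotti--Frankel, Thm.~1). You instead run the integral K\"unneth formula, check that the dimension bounds isolate the single summand $H_d(X,\mathbb Z)\otimes H_{n-d}(Y,\mathbb Z)$ and that the Tor contribution is finite hence zero as a summand of $\mathbb Z$, and then extract $H_d(X,\mathbb Z)\simeq\mathbb Z$ from the purely algebraic fact that $A\otimes B\simeq\mathbb Z$ forces $A\simeq B\simeq\mathbb Z$ for finitely generated abelian groups. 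This buys a small gain in self-containedness --- the torsion-freeness of $H_d(X,\mathbb Z)$ comes out of the argument rather than being quoted --- at the cost of bookkeeping the Tor terms; both routes are sound.
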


\begin{proof}
Suppose that there are a connected algebraic reductive group $R$ and an algebraic variety
$Y$ such that the algebraic variety $R$ is isomorphic to $X\times Y$. Let $n:=\dim R$; then $\dim Y=n-d$. The algebraic varieties $X$ and $Y$ are irreducible, smooth, and affine. Therefore (see \cite[Thm. 7.1]{7}),
\begin{equation}\label{H}
H_i(X, \mathbb Z)=0 \;\;\mbox{for $i>d$}, \;\;\;H_j(Y, \mathbb Z)=0\;\;\mbox{for $j>n-d$}.
\end{equation}
By the universal coefficient theorem, for any algebraic variety $V$ and every $i$, we have
\begin{equation}\label{HHH}
H_i(V, \mathbb Q)\simeq H_i(V, \mathbb Z)\otimes \mathbb Q,
\end{equation}
and by the  K\"unneth formula,
\begin{equation}\label{HHHH}
H_n(R,\mathbb Q)\simeq H_n(X\times Y, \mathbb Q)\simeq \textstyle\bigoplus_{i+j=n}H_i(X, \mathbb Q)\otimes H_j(Y, \mathbb Q).
\end{equation}
Therefore, it follows from \eqref{H} that
\begin{equation}\label{HH}
H_n(R, \mathbb Q)\simeq H_d(X, \mathbb Q)\otimes H_{n-d}(Y, \mathbb Q).
\end{equation}

On the other hand, if $K$ is a maximal compact subgroup of the real Lie group  $R$, then the Iwasawa decomposition shows that $R$, as a topological manifold, is a product of $K$ and a Euclidean space, and therefore, the manifolds $R$ and $K$ have the same homology. Since the algebraic group $R$ is the complexification of  the real Lie group $K$, the dimension of the latter is $n$. Therefore, $H_n(K, \mathbb Q)\simeq\mathbb Q$ because $K$ is a closed connected orientable topological manifold. Whence, $H_n(R, \mathbb Q)\simeq\mathbb Q$. This and
\eqref{HH} imply that $H_d(X, \mathbb Q)\simeq \mathbb Q$. In turn, in view of \eqref{HHH}, this implies that
$H_d(X, \mathbb Z)\simeq \mathbb Z$ because $H_d(X, \mathbb Z)$ is a finitely generated   (see \cite[Sect.\;1.3]{4}),  torsion free (see \cite[Thm.\;1]{1}) Abelian group.
\end{proof}

\begin{corollary}
A contractible algebraic variety \textup (in particular, $\mathbb A^d $\textup) of positive dimension cannot be a direct factor of a
connected reduc\-tive algebraic group variety.
\end{corollary}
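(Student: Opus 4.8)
The plan is to read this off immediately from Theorem \ref{thm3}, which is exactly tailored to it; we stay in the setting $k=\mathbb{C}$ (equivalently, by the Lefschetz principle, characteristic zero) in force for Theorems \ref{thm3}--\ref{thm6}. Suppose $X$ is a contractible algebraic variety with $d:=\dim X>0$, and assume, towards a contradiction, that $X$ is a direct factor of the underlying variety of some connected reductive algebraic group. Since $X$ is contractible, it is homotopy equivalent to a point, so its singular homology is that of a point; in particular $H_d(X,\mathbb{Z})=0$ because $d>0$. On the other hand, Theorem \ref{thm3} forces $H_d(X,\mathbb{Z})\simeq\mathbb{Z}\neq 0$, a contradiction. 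Hence no such $X$ can be a direct factor.

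For the parenthetical special case $X=\mathbb{A}^d$ with $d\geqslant 1$: in the Hausdorff $\mathbb{C}$-topology (the standing convention here), $\mathbb{A}^d(\mathbb{C})=\mathbb{C}^d$ is star-shaped about the origin, hence contractible, and of positive dimension, so it falls under the previous paragraph and the claim follows verbatim.

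There is essentially no obstacle to overcome: all the substance resides in Theorem \ref{thm3}. The only two points requiring a word of care are, first, that the hypothesis ``positive dimension'' is precisely what places the index $d$ in the range where contractibility yields $H_d(X,\mathbb{Z})=0$ while Theorem \ref{thm3} yields $H_d(X,\mathbb{Z})\simeq\mathbb{Z}$, so that the contradiction is genuine (for $d=0$ a point is, of course, both contractible and a direct factor of a torus, so the restriction is necessary); and second, that ``contractible'' is understood topologically, consistently with the conventions fixed just before Theorem \ref{thm3}.
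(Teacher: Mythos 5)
Your proposal is correct and is exactly the argument the paper intends (the corollary is stated without proof precisely because it follows immediately from Theorem \ref{thm3} in the way you describe): contractibility gives $H_d(X,\mathbb Z)=0$ for $d>0$, contradicting $H_d(X,\mathbb Z)\simeq\mathbb Z$. Your remarks on why positive dimension is needed and on the contractibility of $\mathbb C^d$ are accurate and complete the argument.
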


\begin{theorem}\label{thm5}
An algebraic curve cannot be a direct factor of a
connected semisimple algebraic group variety.
\end{theorem}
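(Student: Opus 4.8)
The plan is to argue by contradiction, reducing everything to Theorems \ref{thm2} and \ref{thm3}. So suppose a connected semisimple algebraic group variety $G$ is isomorphic to a product $C\times Y$ of algebraic varieties with $C$ a curve, and fix a point $y\in Y$. As already observed in the proof of Theorem \ref{thm3}, a direct factor of the irreducible, smooth, affine variety $G$ is again irreducible, smooth, and affine; indeed $C$ is isomorphic to the closed subvariety $C\times\{y\}$ of $C\times Y\cong G$. Hence $C$ is the complement $\overline{C}\setminus\{p_1,\ldots,p_r\}$ of a finite set of points in a smooth projective irreducible curve $\overline{C}$, and $r\geq 1$ because an affine variety of positive dimension is not complete. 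Write $g$ for the genus of $\overline{C}$.

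Next I would apply Theorem \ref{thm3} with $d=1$: since $C$ is a one-dimensional direct factor of a connected reductive algebraic group variety, $H_1(C,\mathbb Z)\simeq\mathbb Z$. On the other hand, over $k=\mathbb C$ the space $C$ is a compact Riemann surface of genus $g$ with the $r\geq 1$ points $p_1,\ldots,p_r$ removed, and such a space is homotopy equivalent to a wedge of $2g+r-1$ circles; therefore $H_1(C,\mathbb Z)\simeq\mathbb Z^{2g+r-1}$. Comparing ranks yields $2g+r-1=1$, that is, $2g+r=2$, which together with $r\geq 1$ and $g\geq 0$ forces $g=0$ and $r=2$.

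Thus $\overline{C}\cong\mathbb P^1$ and $C$ is $\mathbb P^1$ with two points deleted, so $C$ is isomorphic to $\mathbb P^1\setminus\{0,\infty\}=\mathbb G_{\mathrm m}$, whose standard coordinate function is a nonconstant invertible regular function on $C$. This contradicts Theorem \ref{thm2}, and the proof is complete. The only step that is not a verbatim citation is the computation of the first Betti number $2g+r-1$ of the smooth affine curve $C$, i.e.\ the determination of its homotopy type; this is classical, but it is the one place I would treat with some care. (Alternatively, one could finish without Theorem \ref{thm2} by noting that $\pi_1(\mathbb G_{\mathrm m})\simeq\mathbb Z$ is infinite while a connected semisimple group has finite fundamental group, but it seems more natural to stay within the framework already set up.)
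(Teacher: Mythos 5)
Your proof is correct, and it ends at the same contradiction as the paper's (a nonconstant invertible regular function on the curve factor, excluded by Theorem \ref{thm2}), but the route to that point is genuinely different. The paper first establishes that the curve is rational: the projection $R\to X$ is a surjective morphism from a rational variety (connected semisimple groups are rational), so $X$ is unirational, hence rational by L\"uroth's theorem; being also smooth and affine, $X$ is an open subset $U$ of $\mathbb A^1$. Theorem \ref{thm3} is then invoked only to exclude the single case $U=\mathbb A^1$ (where $H_1=0\neq\mathbb Z$), and any proper open subset of $\mathbb A^1$ carries a nonconstant unit, contradicting Theorem \ref{thm2}. You instead extract the full strength of Theorem \ref{thm3}: comparing $H_1(C,\mathbb Z)\simeq\mathbb Z$ with the classical first Betti number $2g+r-1$ of a genus-$g$ smooth projective curve with $r\geq 1$ punctures pins the curve down to $\mathbb G_{\mathrm m}$ exactly, with no appeal to rationality of semisimple groups or to L\"uroth. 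Your version identifies the curve precisely and avoids the algebro-geometric inputs; the paper's version uses less of Theorem \ref{thm3} and keeps the curve-theoretic step algebraic (closer in spirit to the closing Remark about adapting the arguments to positive characteristic, although as written both proofs rest on the characteristic-zero Theorem \ref{thm3}). The one step you flagged---that a smooth affine curve over $\mathbb C$ is homotopy equivalent to a wedge of $2g+r-1$ circles---is indeed standard and correct, as is your parenthetical alternative ending via finiteness of the fundamental group of a connected semisimple group.
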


\begin{proof} Suppose an algebraic curve $X$ is a direct factor
  a connected semi\-simple algebraic group $R$ variety.
Then $X$ is irreducible, smooth, affine, and there is a surjective morphism
  $\pi\colon R\to X$.  In view of ra\-tio\-nality of the algebraic variety $R$
 (see \cite[14.14]{2}), the existence of  $\pi$ implies unira\-tionality, and therefore, by L\"uroth's
 theorem, rationality $X$.\;Hence $X$ is isomorphic to an open subset $U$ of $\mathbb A^1$.\;The case $U= \mathbb A^1$ is impossible due to Theorem \ref{thm3}. If $U\neq \mathbb A^1$, then on $X$ there is a nonconstant invertible regular function, which is impossible due to Theo\-rem \ref{thm2}.
\end{proof}

\begin{theorem}\label{thm6}
An algebraic surface cannot be a direct factor of
a con\-nected semisimple algebraic group variety.
\end{theorem}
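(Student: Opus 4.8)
The plan is to reach a contradiction at the level of rational homology, combining Theorem \ref{thm3} with the vanishing of $H_2$ of a semisimple group variety.

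First, arguing as in the proof of Theorem \ref{thm5}, I would suppose that an algebraic surface $X$ is a direct factor of a connected semisimple algebraic group $R$ variety; thus the variety $R$ is isomorphic to $X\times Y$ for some algebraic variety $Y$. Just as in the proof of Theorem \ref{thm3}, the varieties $X$ and $Y$ are then irreducible (in particular, connected), smooth, and affine, with $\dim Y=\dim R-2$. Since a connected semisimple group is reductive, Theorem \ref{thm3} applies to the $2$-dimensional direct factor $X$ and gives $H_2(X,\mathbb Z)\simeq\mathbb Z$; by \eqref{HHH} this yields $H_2(X,\mathbb Q)\simeq\mathbb Q$, so, in particular, $H_2(X,\mathbb Q)\neq0$.

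The crucial point is that $H_2(R,\mathbb Q)=0$. Exactly as in the proof of Theorem \ref{thm3}, the Iwasawa decomposition shows that $R$ is homotopy equivalent to a maximal compact subgroup $K$, which here is a compact connected semisimple Lie group. For such a group, by the theorem of H.\;Hopf, the rational cohomology algebra $H^*(K,\mathbb Q)$ is the exterior algebra of a graded vector space concentrated in odd degrees; since $K$ is semisimple, there is no generator in degree $1$, so every generator lies in degree $\geq 3$, whence $H^2(K,\mathbb Q)=0$. (Alternatively, $H^2(K,\mathbb R)\simeq H^2(\mathfrak k,\mathbb R)=0$ by Whitehead's second lemma, $\mathfrak k$ being a semisimple Lie algebra.) Consequently $H_2(R,\mathbb Q)\simeq H_2(K,\mathbb Q)=0$.

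Finally, I would feed this into the Künneth formula in degree $2$ (obtained as in the derivation of \eqref{HHHH}): since $Y$ is connected,
$$
0=H_2(R,\mathbb Q)\simeq\textstyle\bigoplus_{i+j=2}H_i(X,\mathbb Q)\otimes H_j(Y,\mathbb Q)
$$
has $H_2(X,\mathbb Q)\otimes H_0(Y,\mathbb Q)\simeq H_2(X,\mathbb Q)$ as a direct summand, so $H_2(X,\mathbb Q)=0$, contradicting the preceding paragraph. The only ingredient beyond what has already been assembled is the vanishing $H_2(R,\mathbb Q)=0$, and that is the step I would expect to draw the most scrutiny; everything else is the same bookkeeping with Theorem \ref{thm3} and the Künneth formula that already appears in the proof of Theorem \ref{thm3}. (Incidentally, running the same argument with $H_1$ in place of $H_2$ and using $H^1(K,\mathbb Q)=0$ gives an alternative proof of Theorem \ref{thm5}.)
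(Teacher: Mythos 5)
Your proof is correct and takes essentially the same route as the paper's: pass to a maximal compact subgroup $K$ via the Iwasawa decomposition, use $H^2(K,\mathbb Q)=0$ for compact connected semisimple $K$ (the paper cites Onishchik for this where you invoke Hopf's theorem or Whitehead's lemma), and then contradict Theorem \ref{thm3} through the degree-$2$ K\"unneth summand $H_2(X,\mathbb Q)\otimes H_0(Y,\mathbb Q)$.
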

\begin{proof} Suppose there are a connected semisimple algebraic group
  $R$ and the algebraic varieties  $X$ and $Y$ such that
$X$ is a surface and
$X\times Y$ is isomorphic to the algebraic variety
 $R$. We keep the notation of the proof of Theorem \ref{thm3}. Since $R$ is semisimple,  $K$ is semisimple as well.
Therefore, $H^1(K, \mathbb Q)=H^2(K, \mathbb Q)=0$ (see \cite[\S9, Thm. 4, Cor. 1]{8}).
Since $R$ and $K$ have the same homology, and $\mathbb Q$-vector spaces  $H^i(K, \mathbb Q)$ and $H_i(K, \mathbb Q)$ are dual to each other,
this yealds
\begin{equation}\label{zero}
H_1(R, \mathbb Q)=H_2(R, \mathbb Q)=0.
\end{equation}
Since $R$ is connected, $X$ and $Y$ are also connected. Therefore,
\begin{equation}\label{0}
H_0(X,\mathbb Q)=H_0(Y, \mathbb Q)=\mathbb Q.
\end{equation}
It follows from \eqref{HHHH}, \eqref{zero}, and \eqref{0} that $H_2(X,\mathbb Q)=0$. In view of
\eqref{HHH}, this contradicts Theorem \ref{thm3}, which completes the proof.
 \end{proof}

\begin{remark}{\rm
It seems plausible that,
using, in the spirit of  \cite{2}, \'etale coho\-mo\-logy in place of singular homology and cohomology,
one can adapt the proofs of Theorems \ref{thm3} and \ref{thm6} to the case of field $k$ of any characteristic.}
\end{remark}

\end{document}